\documentclass[11pt]{article}

\usepackage[a4paper,margin=1in]{geometry}
\usepackage[T1]{fontenc}
\usepackage{lmodern}
\usepackage{amsmath,amssymb,amsthm,mathtools}
\usepackage{microtype}
\usepackage[hidelinks]{hyperref}

\newtheorem{theorem}{Theorem}[section]
\newtheorem{lemma}[theorem]{Lemma}
\newtheorem{proposition}[theorem]{Proposition}
\newtheorem{corollary}[theorem]{Corollary}
\newtheorem{conjecture}[theorem]{Conjecture}
\theoremstyle{definition}
\newtheorem{definition}[theorem]{Definition}
\theoremstyle{remark}

\newtheorem*{remark*}{Remark}

\newcommand{\Z}{\mathbb Z}
\newcommand{\one}{\mathbf 1}
\newcommand{\E}{\mathsf E}

\title{Cyclic Incidence Orderings of Complete Graphs\\
and \(3\)-Uniform Hypergraphs}
\author{%
  Junyu Zhou\\[2pt]
  \small\href{mailto:helloworld@junyu33.me}{\texttt{helloworld@junyu33.me}}
}
\date{}
\hypersetup{
  pdftitle={Cyclic Incidence Orderings of Complete Graphs and 3-Uniform Hypergraphs},
  pdfauthor={Junyu Zhou},
  pdfsubject={Classifications of cyclic orderings with shift-equivalent incidence rows},
  pdfkeywords={cyclic orderings, incidence sequences, complete hypergraphs,
    cyclic shifts, group factorizations, combinatorial designs}
}

\begin{document}
\maketitle

\begin{abstract}
We study cyclic orderings of all edges of a complete \(k\)-uniform
hypergraph on \(n\) vertices in which the binary incidence sequences
of the vertices are cyclic shifts of a common word. The shifts are
chosen independently, with no prescribed action on the vertices.
For \(2\leq k<n\), coprimality \(\gcd(n,k)=1\) is known
to suffice even when consecutive edges must differ by a single
vertex exchange. We recall a short orbit construction and prove
the converse for the first two nontrivial uniformities without
any adjacency requirement.
For \(k=2\), an ordering exists exactly when \(n=2\) or \(n\) is odd;
for \(k=3\), exactly when \(n=3\) or \(3\nmid n\).
The necessity proofs use reflected convolution identities and
pair-intersection counts to constrain the vertex shifts to a
torsion coset. For triples, multiplicity-preserving dilation and
conditional prime-power capacity bounds complete the argument.
\end{abstract}

\medskip
\noindent\textbf{Keywords.}
cyclic orderings; incidence sequences; complete hypergraphs;
cyclic shifts; group factorizations; combinatorial designs.

\medskip
\noindent\textbf{2020 Mathematics Subject Classification.}
Primary 05C65; Secondary 05A05, 05B20.

\section{Introduction}

Consider a research group with \(n\) students, exactly \(k\) of whom
must attend each meeting. Over a complete cycle, every \(k\)-element
set of students is to meet exactly once, giving
\(N=\binom nk\) meetings. For each student \(v\), record the cyclic
binary attendance sequence
\[
  x_v(t)=
  \begin{cases}
    1,&\text{if \(v\) attends meeting \(t\)},\\
    0,&\text{otherwise},
  \end{cases}
  \qquad t\in\Z_N.
\]
Call the schedule \emph{fair} if these sequences are cyclic shifts
of one another. Thus each student follows the same attendance
pattern, with an individually chosen starting point.

For example, four students have six possible pair meetings:
\[
  \{1,2\},\ \{1,3\},\ \{1,4\},\ \{2,3\},\ \{2,4\},\ \{3,4\}.
\]
Can these pairs be placed in a cyclic order so that the four
attendance sequences differ only by shifts? The answer is no, as
the graph classification below shows.

For integers \(n\geq k\geq1\), write \([n]=\{1,\ldots,n\}\)
and \(\Z_N=\Z/N\Z\).

\begin{definition}\label{def:ordering}
A \emph{cyclic incidence ordering} is a bijection
\[
  e:\Z_N\longrightarrow\binom{[n]}k,\qquad N=\binom nk,
\]
for which there exist a set \(D\subseteq\Z_N\) and shifts
\(s_v\in\Z_N\), \(v\in[n]\), such that
\[
  \{t\in\Z_N:v\in e(t)\}=D+s_v .
\]
We write \(\E(n,k)\) when such an ordering exists.
\end{definition}

The condition in Definition~\ref{def:ordering} concerns each incidence row
separately. It assumes neither a common shift step between
consecutive vertices nor a cyclic action on the vertex set. In
particular, it imposes no global rotational or dihedral symmetry
and no predetermined group action on the students.

\begin{remark*}[Complement symmetry]
For \(1\leq k<n\), one has \(\E(n,k)\iff\E(n,n-k)\).
Indeed, the complementary ordering \(e^c(t)=[n]\setminus e(t)\)
has attendance supports
\(\Z_N\setminus(D+s_v)=(\Z_N\setminus D)+s_v\),
and complementation is an involution. Thus the graph and triple
classifications below also determine the cases \(k=n-2\) and
\(k=n-3\), respectively, whenever these uniformities are positive.
\end{remark*}

Viewing each meeting by its incidence vector identifies the meetings
with the constant-weight layer
\(B^k(n)=\{x\in\{0,1\}^n:\sum_i x_i=k\}\).
Thus \(\E(n,k)\) asks for a cyclic array containing every word of
\(B^k(n)\) once as a row, whose coordinate columns are cyclic shifts
of a common column. This is the \emph{single-track property} in the
single-track Gray codes introduced by Hiltgen, Paterson, and
Brandestini \cite{HPB} and investigated structurally by Schwartz
and Etzion \cite{SE}. Their codes also require consecutive rows
to differ in one bit; we impose no adjacency condition.

Ruskey, Sawada, and Williams \cite[Section~7, Problem~3]{RSW}
observed that their fixed-weight de Bruijn sequences give
single-track orders of the shorthand language
\(B^k_{k-1}(n-1)\), consisting of words of length \(n-1\) and
weight \(k-1\) or \(k\). They asked which other sets of binary
strings admit single-track orders. The full layer \(B^k(n)\) is
a natural specialization of that question. Their construction
does not assert the single-track property after the omitted final
bit is restored. Universal cycles for \(k\)-subsets \cite{CDG}
encode subsets by overlapping length-\(k\) windows over \([n]\);
this is a different requirement from cyclic shifts of incidence
columns.

En Gad et al. \cite[Section~IV.B]{EGLSB} give constant-weight
single-track constructions that cover full layers for some
parameters, with each step changing \(10\) to \(01\) in cyclically
adjacent coordinates. For \(2\leq k<n\), their first-moment
argument also forces \(\gcd(n,k)=1\) for cyclic codes covering
\(B^k(n)\) when \(k\) is prime
\cite[Theorems~18--19]{EGLSB}. This argument uses the prescribed
transitions. Gregor, Merino, and M\"utze
\cite[Theorem~4.5]{GMM} construct a single-track Hamilton cycle
in the Johnson graph on \(B^k(n)\) whenever \(\gcd(n,k)=1\).
Consecutive words there differ by exchanging a \(0\) and a \(1\),
so their result implies the sufficiency below with an additional
adjacency requirement. We recall the short orbit-development
proof, which requires no adjacency.

\begin{proposition}[Translation-orbit construction]\label{prop:construction}
Let \(2\leq k<n\) and \(\gcd(n,k)=1\).
Then \(\E(n,k)\) holds.
\end{proposition}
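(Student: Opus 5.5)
Let \(\Z_n\) act on \([n]\) by rotation, identifying \([n]\) with \(\Z_n\) via \(v\mapsto v-1\). It acts on \(\binom{[n]}k\) by \(A\mapsto A+1\). The first step is to show that this action is free when \(\gcd(n,k)=1\). If \(A+j=A\) for some \(j\in\Z_n\), then \(A\) is a union of cosets of the subgroup \(\langle j\rangle\), which has order \(d=n/\gcd(n,j)\). Hence \(d\mid k\). Since also \(d\mid n\) and \(\gcd(n,k)=1\), we get \(d=1\), so \(j=0\). Therefore every orbit has exactly \(n\) elements, \(n\mid N\), and there are \(m=N/n\) orbits. Choose representatives \(A_0,\ldots,A_{m-1}\).

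Next we define the ordering by interleaving the orbits. Write \(t\in\Z_N\) uniquely as \(t=qm+r\) with \(0\le q<n\) and \(0\le r<m\), and set \(e(t)=A_r+q\). Freeness and the choice of representatives make \(e\) a bijection. The key feature is that \(e(t+m)=e(t)+1\) for all \(t\), including the wraparound. If \(q<n-1\) this is clear. If \(q=n-1\), then \(t+m\) reduces to \(r\), and \(A_r+n=A_r\). Thus \(t\mapsto t+m\) realizes the rotation.

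Now take \(D=\{t: 0\in e(t)\}\). Then
\[
v\in e(t)\iff 0\in e(t)-v\iff 0\in e(t-vm)\iff t-vm\in D .
\]
So the support of vertex \(v\) is \(D+vm\), and we may take \(s_v=vm\).

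The only real obstacle is getting the wraparound right. We cannot simply list the orbits one after another; the orbits must be interleaved with period \(m\), so that a shift by \(m\) in \(\Z_N\) matches a rotation by \(1\) on the vertices.
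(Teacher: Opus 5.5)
Your proof is correct and follows the paper's argument: freeness of the translation action on \(k\)-subsets when \(\gcd(n,k)=1\), interleaving the orbit representatives so that \(e(t+m)=e(t)+1\), and reading off the shifts \(s_v=vm\). Your explicit check of the wraparound case is a detail the paper leaves implicit.
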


\begin{proof}
Identify the vertices with \(\Z_n\), acting on \(k\)-subsets by
translation. If a \(k\)-subset \(B\) has stabilizer \(H\), then
\(|H|\mid n\). Moreover, \(B\) is a union of \(H\)-cosets, so
\(|H|\mid k\). Hence \(H\) is trivial and every orbit has length \(n\).

Choose representatives \(B_0,\ldots,B_{q-1}\) for the orbits, where
\(q=\binom nk/n\). Define
\[
  e(aq+j)=B_j+a
  \qquad(0\leq a<n,\ 0\leq j<q).
\]
This lists every \(k\)-subset once. With time interpreted modulo
\(nq\), it satisfies \(e(t+q)=e(t)+1\). Consequently, if \(D\)
is the attendance support of vertex \(0\), then the support of
vertex \(v\) is \(D+vq\).
\end{proof}

The ordering in this construction has a specified translation
symmetry. Its use for existence places no symmetry assumption on
an arbitrary ordering.

Thus coprimality is always sufficient for \(2\leq k<n\), and
the substantive question is whether it is also necessary.
In this paper we answer this question affirmatively for the
first two nontrivial uniformities, \(k=2\) and \(k=3\).

\begin{theorem}[Graphs]\label{thm:graph}
For \(n\geq2\), \(\E(n,2)\) holds if and only if \(n=2\) or \(n\) is odd.
\end{theorem}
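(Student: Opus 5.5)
The sufficiency direction needs no new work. For \(n=2\) we have \(N=1\), and the single edge \(\{1,2\}\) gives \(D=\{0\}\) with \(s_1=s_2=0\). For odd \(n\geq3\) we have \(\gcd(n,2)=1\), so Proposition~\ref{prop:construction} applies. The content is the converse: for even \(n\geq4\), no cyclic incidence ordering with \(k=2\) exists.

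First I would convert the definition into convolution identities on \(\Z_N\), where \(N=n(n-1)/2\). Write \(f=\one_D\), so \(|D|=n-1\) because each vertex lies in \(n-1\) edges. Let \(S\) be the multiset of shifts \(\{s_v\}\) and \(g=\one_S\). Three facts follow from the definition.
\begin{itemize}
\item Every time \(t\) lies in exactly two supports, so \(f*g\equiv2\).
\item Each pair \(\{u,v\}\) occurs exactly once, so \(c(s_u-s_v)=1\) for \(u\neq v\). Here \(c=f*\tilde f\), with \(\tilde f(t)=f(-t)\), is the autocorrelation, given by \(c(d)=|D\cap(D+d)|\).
\item Since \(c(0)=n-1>1\), the shifts \(s_v\) are pairwise distinct.
\end{itemize}
On the Fourier side the first fact says \(\hat f(\chi)\hat g(\chi)=0\) for every nontrivial character \(\chi\). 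The second says \(c\) equals \(1\) on the difference set \(S-S\setminus\{0\}\), so \(\hat c=|\hat f|^2\).

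Next comes a parity (torsion) observation. If some difference \(d=s_u-s_v\) has order \(2\) in \(\Z_N\), then \(D\cap(D+d)\) is invariant under \(t\mapsto t+d\). That translation is fixed-point-free, so \(c(d)\) is even, contradicting \(c(d)=1\). When \(4\mid n\), \(N\) is even, and I would try to force some difference \(s_u-s_v\) to equal \(N/2\). The approach is to reduce modulo \(2\): project \(f*g\equiv2\) to \(\Z_N/\langle N/2\rangle\) and compare the parities of \(|S\cap(S+N/2)|\)-type counts. The aim is to show that the \(n\) distinct shifts cannot all avoid differing by \(N/2\). Equivalently, pushing \(S\) into \(\Z_2\) via \(t\mapsto t\bmod 2\), the two classes must each contain an even number of shifts, and counting pairs by parity classes then contradicts \(c=1\) on differences.

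The case \(n\equiv2\pmod4\) is where I expect the main obstacle, since \(N\) is odd there and no element of order \(2\) exists. My first attempt would be to work in \(\mathbb F_2[\Z_N]\). There \(f*g\equiv0\), and Frobenius gives \(f(x)^2=f(x^2)\), so dilation by \(2\), which is a multiplier because \(N\) is odd, preserves multiplicities. Reflecting, the identity \(c=f*\tilde f\) reduced mod \(2\) equals \(f(x)f(x^{-1})\). Combined with \(c\equiv1\) on \(S-S\setminus\{0\}\), this should force \(S\) into a coset of the subgroup fixed by a suitable power of the dilation, that is, a torsion coset. A cardinality comparison, \(|S|=n\) versus the size of that coset, together with \(\sum_{d\neq0}c(d)=(n-1)(n-2)\), should then give the contradiction. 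If that fails, the fallback is a direct double count of the triples \((u,v,w)\) via the unique meeting times of each pair, which exploits that \(n-1\) is odd.
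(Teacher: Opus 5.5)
There is a genuine gap: the necessity direction, which is the substance of the theorem, is never carried out. Your sufficiency argument is fine. Your observation that an order-$2$ difference $d\in S-S$ would make $c(d)=|D\cap(D+d)|$ even is also correct.

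\emph{Case $4\mid n$.} You need a pair with $s_u-s_v=N/2$, but nothing you propose produces one. Projecting $\one_S*\one_D=2\one_{\Z_N}$ to $\Z_2$ does show, since $|D|=n-1$ is odd, that each parity class contains exactly $n/2$ shifts. However, $c=1$ is known only on $S-S\setminus\{0\}$ and says nothing about $c$ on the other odd or even differences. So counting pairs by parity class gives no contradiction and no reason for $N/2\in S-S$.

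\emph{Case $n\equiv2\pmod4$.} The Frobenius/dilation discussion in $\mathbb F_2[\Z_N]$ is only a hope that it ``should force $S$ into a coset.'' No identity constraining the shifts is written down, and the fallback triple count is not specified. So neither case of the converse is proved.

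The missing idea is an \emph{additive} constraint on the shifts. It handles all even $n>2$ at once, with no split modulo $4$ and no need for elements of order $2$.

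\begin{enumerate}
\item Fix $b\in S$. For each $c\in S\setminus\{b\}$, let $t_c$ be the unique time in $(D+b)\cap(D+c)$, and put $x_c=t_c-b$ and $y_c=t_c-c$, both in $D$.
\item Exactly two vertices attend every time, so $c\mapsto x_c$ is a bijection onto $D$.
\item Reflecting one factor preserves balance: at nontrivial characters, $\widehat{\one_S}\,\overline{\widehat{\one_D}}$ still vanishes. Hence $\one_S*\one_{-D}=2\one_{\Z_N}$. For $y\in D$, this says exactly one $c\ne b$ has $c+y-b\in D$, so $c\mapsto y_c$ is also a bijection onto $D$.
\item Summing $x_c-y_c=c-b$ over $c\ne b$ gives $\sum_{c\ne b}(c-b)=0$, that is, $nb=\sum_{c\in S}c$ for every $b\in S$. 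Hence $n(b-b')=0$ for all $b,b'\in S$.
\item The $n$ distinct shifts therefore lie in a coset of the $n$-torsion subgroup of $\Z_N$, which has order $\gcd(n,N)$. For $n=2h$ with $h\ge2$, one has $N=h(2h-1)$ and $\gcd(n,N)=h<n$, a contradiction.
\end{enumerate}

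This is exactly the paper's route, via Lemma~\ref{lem:reflection} and Lemma~\ref{lem:coordinates}.
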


\begin{theorem}[Triples]\label{thm:triple}
For \(n\geq3\), \(\E(n,3)\) holds if and only if \(n=3\) or \(3\nmid n\).
\end{theorem}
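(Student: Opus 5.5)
Sufficiency is immediate: for \(n=3\) there is a single triple and \(N=1\), so the condition holds trivially; for \(3\nmid n\) with \(n\geq4\) we have \(\gcd(n,3)=1\), and Proposition~\ref{prop:construction} applies. The work is necessity. Assume \(\E(n,3)\) with \(n\geq6\) and \(3\mid n\), so \(N=\binom n3\). Let \(D\subseteq\Z_N\) with \(|D|=\binom{n-1}2\), and let \(\one_D\) be its indicator. Write \(f_v=\one_{D+s_v}\). Every time \(t\) lies in exactly three supports, so
\[
  \sum_v f_v=3\cdot\one .
\]
For each pair \(u\ne v\), the supports of \(u\) and \(v\) meet in exactly \(n-2\) times. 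Hence the autocorrelation \(c(h)=|D\cap(D+h)|\) satisfies \(c(s_u-s_v)=n-2\) for all \(u\neq v\). I also expect to need a triple count, that three distinct supports meet in exactly one time.

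The first step is Fourier analysis on \(\Z_N\). For every character \(\chi\ne1\), the identity \(\sum_v f_v=3\cdot\one\) gives \(\hat\one_D(\chi)\sum_v\chi(s_v)=0\). Summing the pair condition over ordered pairs gives a reflected convolution identity:
\[
  (\one_D*\one_{-D})\cdot\Big(\sum_v\delta_{s_v}*\sum_v\delta_{-s_v}\Big)
\]
is controlled on the difference set of the shifts. Combining these, I would show that \(S=\{s_v\}\) satisfies \(\big|\sum_v\chi(s_v)\big|^2\in\{0,n^2\}\)-type constraints. I would then aim to prove that the shifts lie in a coset \(a+T\) of a torsion subgroup \(T\) of order dividing \(n\) (or \(3n\)), with \(D\) a union of residue classes relative to a complementary structure. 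The model case is the construction, where \(s_v=vq\) and \(T=q\Z_N\). The key mechanism is that the multiset \(S-S\) avoids the set where \(c\ne n-2\), which is large, so \(S\) is forced into a subgroup.

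The second step uses multiplicity-preserving dilation. For \(m\) coprime to \(N\), replacing \(t\mapsto mt\) preserves all the convolution identities. I would use this to normalize the coset and show that \(\Z_N\) factors as \(D'\oplus T\)-like pieces. Concretely, the occupancy \(\sum_v\one_{D+s_v}=3\) would become a statement that \(D\) tiles \(\Z_N\) with multiplicity \(3\) by a set of \(n\) translates in a cyclic group of order \(n\). From this, the vertex set inherits an action of \(\Z_n\) by \(v\mapsto\) the vertex whose shift is \(s_v+q\). That action permutes the triples, and every orbit then has length exactly \(n\), because time translation by \(q\) is fixed-point free on \(\Z_N\). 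But when \(3\mid n\), the triple \(\{0,n/3,2n/3\}\) has stabilizer of order \(3\), a contradiction, provided the induced vertex action is a regular \(\Z_n\)-action.

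The main obstacle is the first step: showing that the shifts are genuinely a coset of the order-\(n\) subgroup and that they induce a regular action, with no a priori symmetry assumed. This is where the abstract's prime-power capacity bounds presumably enter. I would first try, for each prime power \(p^a\,\|\,n\), to project onto \(\Z_{p^a}\)-quotients and count how many shifts each residue class can hold, using \(|D\cap(D+h)|=n-2\). For \(p=3\), this should force a collision that contradicts the triple intersection count. If the full coset structure is not reachable, I would settle for showing that the image of \(S\) in the \(3\)-part of \(\Z_N\) is uniformly distributed, and then derive the contradiction from divisibility of \(|D|\) restricted to cosets.
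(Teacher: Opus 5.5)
Your sufficiency argument is fine, but necessity has a genuine gap, and the target of your Step 1 cannot be reached as stated. If $3\mid n$, write $n=3m$. Then $N=\binom n3=mK$ with $K=(n-1)(n-2)/2$ and $3\nmid K$, so $\gcd(n,N)=n/3$. Even the subgroup of $\Z_N$ annihilated by $3n$ has order only $n/3$. Hence $q=N/n$ is not an integer, and no subgroup of order $n$ exists. So $n$ distinct shifts can never lie in a coset of a subgroup annihilated by $n$ or $3n$. Likewise, by Parseval, $|\sum_v\chi(s_v)|^2\in\{0,n^2\}$ for all $\chi$ would force $n\mid N$. Your Step 1 is therefore not a lemma on the way to the contradiction: it \emph{is} the contradiction. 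The regular $\Z_n$-action and the stabilizer of $\{0,n/3,2n/3\}$ in Step 2 never come into play. You also give no mechanism for Step 1. The heuristic that $S-S$ avoids the set where $c\neq n-2$ does not force $S$ into any subgroup. Dilating by a unit $t\mapsto mt$ with $\gcd(m,N)=1$ merely relabels $\Z_N$ and cannot produce new constraints.

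What can actually be extracted is weaker, and the paper needs a second tool to finish. Fix $b\in S$ and sum $x-y=c-b$ over all pairs $(c,t)$ with $c\neq b$ and $t\in(D+b)\cap(D+c)$, where $x=t-b$ and $y=t-c$. The reflected balance $\one_S*\one_{-D}=3\one_{\Z_N}$ shows that both coordinate multisets contain every element of $D$ exactly twice. This gives $(n-2)\bigl(\sum_{c\in S}c-nb\bigr)=0$, hence $n(n-2)(b-b')=0$. So $S$ lies in a subgroup of order $L=n(n-2)/3$ ($n$ odd) or $n(n-2)/6$ ($n$ even). That settles $n=6$ (where $L=4$), but $L$ exceeds $n$ once $n\geq9$, so it is not yet a contradiction. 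Slicing time into cosets of this subgroup gives $\one_A*\one_{D_j}=3\one_{\Z_L}$ with $|A|=n$ and $|D_j|=d$, where $d=n-2$ or $(n-2)/2$.

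The missing idea is then a capacity bound modulo $d$. Apply Frobenius dilation to $\one_A$ \emph{alone} by primes $p>3$ with $p\nmid n$; this is a non-injective pushforward that keeps multiplicities, not a unit relabeling. It preserves the balance and makes each $D_j$ equidistributed modulo the odd part of $d$. A $2$-adic cyclotomic bound on occupied Fourier levels then limits every residue class modulo $d$ to at most $\gcd(n,d)\leq2$ points of $D_j$. Since $n>d$ (odd case) or $n>2d$ (even case), pigeonhole gives two or three vertices whose shifts are congruent modulo $d$ and which can never attend together, so some triple is missing. The obstruction thus lives modulo $d$, which is coprime to $3$, rather than in the $3$-part of $\Z_N$ where you propose to look.
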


For necessity, we first derive convolution and pair-intersection
identities valid for every \(2\leq k<n\). Reflecting a convolution
factor preserves balance, and a count of local coordinates imposes
an additive constraint on the vertex shifts. This already settles
the graph case. For triples, the resulting torsion reduction is
followed by a bound on template points in a residue class, obtained
by first dilating the shift multiplicities and then applying a
prime-power Fourier estimate on each coset.

Together with the general construction, these two classifications
suggest that coprimality is the exact existence criterion.
We formulate this assertion as Conjecture~\ref{conj:main} in the concluding
section; its necessity in higher uniformities remains open.

\section{Universal constraints on cyclic incidence orderings}

We now develop necessary identities that hold for every \(2\leq k<n\).

For functions on a finite abelian group \(G\), use the convolution
and Fourier transform
\[
 (F*H)(t)=\sum_{y\in G}F(y)H(t-y),
 \qquad
 \widehat F(\chi)=\sum_{y\in G}F(y)\overline{\chi(y)}
 \quad(\chi\in\widehat G).
\]
Here \(\widehat G\) is the character group, \(\one_D\) is the
indicator of a set \(D\), and \(\one_G\) is the constant function
with value \(1\). Fourier transformation takes convolution to
pointwise multiplication.

\begin{lemma}[Reflected balance]\label{lem:reflection}
Let \(F,H:G\to\mathbb R\) satisfy \(F*H=r\one_G\).
For \(F^-(x)=F(-x)\) and \(H^-(x)=H(-x)\), one has
\[
  F^-*H=F*H^-=r\one_G .
\]
\end{lemma}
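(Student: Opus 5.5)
The plan is to argue directly from the convolution identity via Fourier transform, with a direct combinatorial check as a fallback. Taking transforms of \(F*H=r\one_G\) gives \(\widehat F(\chi)\widehat H(\chi)=r\,\widehat{\one_G}(\chi)\) for every \(\chi\in\widehat G\). Since \(\widehat{\one_G}(\chi)=|G|\) for \(\chi\) trivial and \(0\) otherwise, this says two things. First, \(\widehat F(\chi_0)\widehat H(\chi_0)=r|G|\), where \(\chi_0\) is the trivial character. Second, \(\widehat F(\chi)\widehat H(\chi)=0\) for every \(\chi\neq\chi_0\).

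The key observation is that reflection corresponds to complex conjugation of the transform for real functions. Indeed,
\[
\widehat{F^-}(\chi)=\sum_{y}F(-y)\overline{\chi(y)}=\sum_{y}F(y)\chi(y)=\overline{\widehat F(\chi)},
\]
using that \(F\) is real-valued. Hence \(\widehat{F^-*H}(\chi)=\overline{\widehat F(\chi)}\,\widehat H(\chi)\).

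For \(\chi\neq\chi_0\), one of \(\widehat F(\chi)\) and \(\widehat H(\chi)\) is zero, so this product vanishes. For \(\chi=\chi_0\), \(\widehat F(\chi_0)=\sum F\) is real, so the product equals \(\widehat F(\chi_0)\widehat H(\chi_0)=r|G|\). Thus \(F^-*H\) and \(r\one_G\) have the same Fourier transform, and Fourier inversion on the finite group \(G\) gives \(F^-*H=r\one_G\). The identity \(F*H^-=r\one_G\) follows by the symmetric argument with the roles of \(F\) and \(H\) swapped. Alternatively, it follows by applying the reflection \(x\mapsto -x\) to the identity just proved: \((F^-*H)^-=F*H^-\), and \(r\one_G\) is invariant under reflection.

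There is no genuine obstacle here. The only points needing care are the conjugation identity, which is where real-valuedness is used, and the fact that nonvanishing of the product at nontrivial characters is excluded factor by factor. The latter holds because \(\mathbb C\) has no zero divisors, so the vanishing of \(\widehat F\widehat H\) forces one factor to vanish, and that factor's modulus is unchanged by conjugation.
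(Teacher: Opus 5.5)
Your proposal is correct and follows essentially the same route as the paper: for real $F$, reflection conjugates the Fourier transform, so the vanishing of $\widehat F(\chi)\widehat H(\chi)$ at nontrivial characters survives reflecting either factor, the trivial-character value (product of the real masses) is unchanged, and Fourier inversion finishes. Your alternative derivation of $F*H^-=r\one_G$ from the identity $(F^-*H)^-=F*H^-$ is also valid, but it is a minor addition rather than a different method.
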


\begin{proof}
For real \(F\), \(\widehat{F^-}(\chi)=\overline{\widehat F(\chi)}\),
so reflection preserves its Fourier zero set. At every nontrivial
character, \(\widehat F(\chi)\widehat H(\chi)=0\), and this remains
true after reflecting either factor. At the trivial character the
two masses, and hence their product, are unchanged. Fourier
inversion proves both identities.
\end{proof}

This lemma reflects a convolution identity only. It requires no
reflection symmetry of an ordering.

Suppose that \(\E(n,k)\) holds with \(2\leq k<n\), and put
\(N=\binom nk\).
The shifts in Definition~\ref{def:ordering} are distinct: for distinct vertices
\(u,v\), there is a \(k\)-subset containing \(u\) and excluding \(v\),
so their incidence rows differ. Let \(S=\{s_v:v\in[n]\}\).
Counting meetings at a vertex, at a time, and at a pair of vertices
gives, respectively,
\begin{gather}
 |S|=n,\qquad |D|=\binom{n-1}{k-1},\qquad
 \one_S*\one_D=k\one_{\Z_N},\label{eq:balance}\\
 |(D+b)\cap(D+c)|=\binom{n-2}{k-2}
 \qquad(b,c\in S,\ b\ne c).\label{eq:paircount}
\end{gather}
These identities do not require a choice of equally spaced shifts.

\begin{lemma}[Coordinate count]\label{lem:coordinates}
Let \(S,D\subseteq G\), let \(|S|=m\), and suppose that
\(\one_S*\one_D=r\one_G\). Fix \(b\in S\), and index all pairs
\[
  (c,t)\quad\text{with}\quad
  c\in S\setminus\{b\},\qquad t\in(D+b)\cap(D+c).
\]
In the two multisets of coordinates \(x=t-b\) and \(y=t-c\),
every element of \(D\) occurs exactly \(r-1\) times.
If every intersection above has size \(\lambda\), then, with
\(T=\sum_{c\in S}c\) as a sum in \(G\),
\begin{equation}\label{eq:sumidentity}
  \lambda(T-mb)=0.
\end{equation}
\end{lemma}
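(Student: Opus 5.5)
The proof will be a double count. We first verify the multiplicity claim, then sum coordinates in \(G\).

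\emph{Multiplicities for \(x\).} Fix \(d\in D\) and set \(t=d+b\). Then \(t\in D+b\) automatically, so the pairs \((c,t)\) giving coordinate \(x=d\) are in bijection with the \(c\in S\setminus\{b\}\) satisfying \(t\in D+c\). The number of \(c\in S\) with \(t\in D+c\) is \((\one_S*\one_D)(t)=r\). The index \(c=b\) is one of these and is excluded, so exactly \(r-1\) values of \(c\) remain. Hence \(d\) occurs exactly \(r-1\) times as an \(x\)-coordinate.

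\emph{Multiplicities for \(y\).} Fix \(d\in D\). For each \(c\in S\setminus\{b\}\), the only candidate time is \(t=d+c\). This gives a valid pair exactly when \(d+c\in D+b\), that is, when \(b\in d+c-D\). So we must count \(c\in S\setminus\{b\}\) with \(d+c-b\in D\). Here Lemma~\ref{lem:reflection} enters. Write the condition as \(c-(b-d)\in D\), i.e.\ \(c\in D+(b-d)\). The number of \(c\in S\) with \(c\in D+u\) is \(\sum_{c}\one_S(c)\one_D(c-u)=(\one_S*\one_D^{-})(-u)\)-type expression, namely \((\one_S^-*\one_D)\) evaluated at a point. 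By Lemma~\ref{lem:reflection} this is constant, equal to \(r\). With \(u=b-d\), the index \(c=b\) qualifies, since \(b-u=d\in D\). Removing it leaves exactly \(r-1\) values.

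\emph{Sum identity.} Let \(P\) be the set of pairs. Summing \(x-y=c-b\) over \(P\) gives
\[
\sum_{P}(x-y)=\sum_{c\ne b}\lambda(c-b)=\lambda\bigl(T-b-(m-1)b\bigr)=\lambda(T-mb).
\]
By the multiplicity claims, the \(x\)-multiset and the \(y\)-multiset are both \((r-1)\) copies of \(D\). So \(\sum x=\sum y\) in \(G\), the left side vanishes, and \(\lambda(T-mb)=0\).

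The main care point is the \(y\)-count. There we need the reflected balance \(\one_S^-*\one_D=r\one_G\) (equivalently \(\one_S*\one_D^-\)), not the original identity, and we must get the sign conventions right when identifying the count as a convolution value.
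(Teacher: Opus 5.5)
Your proof is correct and follows the paper's argument essentially step for step. The $x$-count comes from the original balance at $b+x$, the $y$-count from the reflected balance of Lemma~\ref{lem:reflection} (the paper applies $\one_S*\one_{-D}=r\one_G$ to $c-x=b-y$), and the two equal coordinate sums then cancel against $\lambda\sum_{c\ne b}(c-b)$. One harmless slip: the count of $c\in S$ with $c\in D+u$ is $(\one_S*\one_D^-)(u)=(\one_S^-*\one_D)(-u)$, not $(\one_S*\one_D^-)(-u)$, but both reflected convolutions are identically $r$, so this does not affect the argument.
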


\begin{proof}
For \(x\in D\), the original balance at \(b+x\) counts \(r\)
representations \(b+x=c+y\), with \(c\in S\) and \(y\in D\).
Exactly one has \(c=b\), namely \(y=x\). Thus \(x\) occurs \(r-1\)
times in the first multiset.

For a fixed \(y\in D\), write the same equation as
\(c-x=b-y\). By Lemma~\ref{lem:reflection},
\(\one_S*\one_{-D}=r\one_G\), so there are exactly \(r\)
representations of this form with \(c\in S\) and \(x\in D\).
Again the unique representation with \(c=b\) is \(x=y\).
Thus \(y\) also occurs \(r-1\) times.

Summing \(x-y=c-b\) over the indexed pairs now gives zero on the
left, since the two coordinate multisets agree. If each
intersection has size \(\lambda\), the right side is
\(\lambda\sum_{c\ne b}(c-b)=\lambda(T-mb)\).
\end{proof}

\section{The graph case}

\begin{proof}[Proof of Theorem~\ref{thm:graph}]
For \(n=2\) the unique edge gives a one-meeting cycle.
For odd \(n\geq3\), apply Proposition~\ref{prop:construction} with \(k=2\).

Conversely, suppose \(\E(n,2)\) holds and \(n>2\). Put
\(N=\binom n2\) and use \(D,S\subseteq\Z_N\) from
Equations~\eqref{eq:balance} and~\eqref{eq:paircount}. Here \(|S|=n\), the convolution
height is \(2\), and every pair of translates has intersection
size \(1\). In Lemma~\ref{lem:coordinates}, each of the two coordinate
multisets therefore contains every element of \(D\) once. Writing
\(T=\sum_{c\in S}c\), Equation~\eqref{eq:sumidentity} gives
\[
  nb=T\qquad(b\in S).
\]
Fixing \(b_0\in S\), we obtain \(n(b-b_0)=0\) for all \(b\in S\).
Thus the \(n\) distinct shifts lie in a coset of
\[
  \{x\in\Z_N:nx=0\},
\]
which has size \(\gcd(n,N)\). Hence \(n\leq\gcd(n,N)\leq n\),
so \(n\mid N\).

If \(n=2h\) is even, then \(N=h(2h-1)\) and
\[
  \gcd(n,N)=h\gcd(2,2h-1)=h=n/2,
\]
contradicting the required size of the torsion subgroup. This
excludes every even \(n>2\).
\end{proof}

\begin{corollary}[Forced cyclic symmetry]\label{cor:graphsymmetry}
Let \(n\geq3\) be odd, and put \(N=\binom n2\) and
\(q=N/n=(n-1)/2\). For every cyclic incidence ordering \(e\)
of \(\binom{[n]}2\), there is an \(n\)-cycle \(\sigma\) on
\([n]\) such that
\[
  e(t+q)=\sigma(e(t))\qquad(t\in\Z_N).
\]
\end{corollary}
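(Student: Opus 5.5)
The plan is to reuse the torsion-coset conclusion from the proof of Theorem~\ref{thm:graph} and to observe that, for odd \(n\), it already pins down \(S\) completely. For odd \(n\geq3\) we have \(N=n(n-1)/2=nq\), so \(n\mid N\). Hence the subgroup
\[
H=\{x\in\Z_N:nx=0\}
\]
has order \(\gcd(n,N)=n\), and it is exactly the cyclic subgroup generated by \(q\), namely \(H=\{0,q,2q,\ldots,(n-1)q\}\). The proof of Theorem~\ref{thm:graph} shows that \(n(b-b_0)=0\) for all \(b\in S\). So \(S\subseteq b_0+H\), and since \(|S|=n=|H|\), we get the equality \(S=b_0+H\). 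This step uses only Lemma~\ref{lem:coordinates} with \(\lambda=1\), exactly as in that proof, and it holds for every ordering \(e\). No symmetry assumption enters.

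Next I will define \(\sigma\) through the shifts. The shifts \(s_v\) are distinct, as noted before Equation~\eqref{eq:balance}, so \(v\mapsto s_v\) is a bijection \([n]\to S\). Translation by \(q\) maps the coset \(S=b_0+H\) onto itself. Let \(\sigma\) be the permutation of \([n]\) determined by
\[
s_{\sigma(v)}=s_v+q.
\]
Translation by \(q\) acts on the coset \(b_0+H\) as a single \(n\)-cycle, because \(q\) has order exactly \(n\) in \(\Z_N\). Transporting this action along the bijection \(v\mapsto s_v\) shows that \(\sigma\) is an \(n\)-cycle.

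Finally I will verify the identity \(e(t+q)=\sigma(e(t))\) by unwinding Definition~\ref{def:ordering}. We have \(v\in e(t+q)\) iff \(t+q\in D+s_v\), iff \(t\in D+s_v-q\). Since \(s_v-q=s_{\sigma^{-1}(v)}\), this holds iff \(\sigma^{-1}(v)\in e(t)\), that is, iff \(v\in\sigma(e(t))\).

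There is no real obstacle here. The only point needing care is the cardinality argument \(|H|=n\), which upgrades the containment \(S\subseteq b_0+H\) to an equality; everything else is bookkeeping. One also has to keep the direction of \(\sigma\) versus \(\sigma^{-1}\) consistent in the final equivalence.
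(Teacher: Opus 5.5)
Your proposal is correct and follows essentially the same route as the paper. Both use the torsion coset from the proof of Theorem~\ref{thm:graph}, use the cardinality $|\{x:nx=0\}|=n$ to force $S=b_0+\{0,q,\ldots,(n-1)q\}$, define $\sigma$ by $s_{\sigma(v)}=s_v+q$, and unwind the supports $D+s_v$. You are somewhat more explicit than the paper about why $\sigma$ is an $n$-cycle (namely that $q$ has order exactly $n$) and about the $\sigma$ versus $\sigma^{-1}$ bookkeeping.
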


\begin{proof}
The preceding proof places the \(n\) distinct shifts in a coset
of \(\{x\in\Z_N:nx=0\}\). Since \(N=nq\), this subgroup
has exactly \(n\) elements, namely \(0,q,\ldots,(n-1)q\).
Hence, for some \(b_0\),
\[
  S=b_0+\{0,q,\ldots,(n-1)q\}.
\]
Define \(\sigma\) by \(s_{\sigma(v)}=s_v+q\).
Then \(\sigma\) is an \(n\)-cycle and
\(D+s_{\sigma(v)}=D+s_v+q\). Consequently,
\(\sigma(v)\in e(t+q)\) if and only if \(v\in e(t)\),
which proves the assertion.
\end{proof}

\section{The triple case: torsion reduction}

Suppose \(\E(n,3)\) holds with \(n>3\), and put \(M=\binom n3\).
The shifts \(S\subseteq\Z_M\) and template \(D\subseteq\Z_M\)
satisfy
\[
  |S|=n,\qquad
  \one_S*\one_D=3\one_{\Z_M},\qquad
  |(D+b)\cap(D+c)|=n-2\quad(b\ne c).
\]
In Lemma~\ref{lem:coordinates} both coordinate multisets contain every
element of \(D\) twice. Therefore
\[
  (n-2)(T-nb)=0\qquad(b\in S),
  \qquad T=\sum_{c\in S}c.
\]
Subtracting the equations for two shifts gives
\begin{equation}\label{eq:tripletorsion}
  n(n-2)(b-b')=0\quad\text{in }\Z_M
  \qquad(b,b'\in S).
\end{equation}
No integer factor has been cancelled in this identity.

\begin{proposition}[Reduction to coset slices]\label{prop:reduction}
Suppose \(3\mid n\), \(n>3\), and \(\E(n,3)\) holds. Define
\[
  (L,d,c)=
  \begin{cases}
    \bigl(n(n-2)/3,\ n-2,\ (n-1)/2\bigr),&n\text{ odd},\\
    \bigl(n(n-2)/6,\ (n-2)/2,\ n-1\bigr),&n\text{ even}.
  \end{cases}
\]
Then \(M=cL\) and \(L=(n/3)d\). There exist a common set
\(A\subseteq\Z_L\), \(|A|=n\), and sets
\(D_0,\ldots,D_{c-1}\subseteq\Z_L\) such that
\begin{equation}\label{eq:slicebalance}
  \one_A*\one_{D_j}=3\one_{\Z_L},
  \qquad |D_j|=d\qquad(0\leq j<c).
\end{equation}
On slice \(j\), at coordinate \(x\in\Z_L\), the attending vertices
are exactly the shifts in \(A\cap(x-D_j)\).
\end{proposition}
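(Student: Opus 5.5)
The plan is to turn the torsion identity \eqref{eq:tripletorsion} into the statement that all shifts lie in a single coset of the subgroup \(c\Z_M\) of order \(L\). After that, the balance identity from Section~4 is restricted to each coset of \(c\Z_M\).

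First I will check the arithmetic. Write \(M=n(n-1)(n-2)/6\).

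\emph{Odd \(n\).} Both \((n-1)/2\) and \(n/3\) are integers, so \(L=n(n-2)/3\) and \(c=(n-1)/2\) are integers with \(M=cL\) and \(L=(n/3)(n-2)=(n/3)d\).

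\emph{Even \(n\).} Here \(6\mid n\), so \(L=n(n-2)/6\) is an integer, \(c=n-1\), \(M=cL\), and \(L=(n/3)\cdot(n-2)/2=(n/3)d\).

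Next comes the key step. By \eqref{eq:tripletorsion}, every difference \(b-b'\) of shifts lies in the torsion subgroup \(\{x\in\Z_M:n(n-2)x=0\}\), which has order \(g=\gcd(n(n-2),M)\). I will show \(g=L\).

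\emph{Odd \(n\).} Here \(n(n-2)=3L\) and \(M=cL\), so \(g=L\gcd(3,c)\). Since \(3\mid n\), we have \(n-1\equiv 2\pmod 3\), hence \(c=(n-1)/2\equiv1\pmod3\) and \(g=L\).

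\emph{Even \(n\).} Here \(n(n-2)=6L\), so \(g=L\gcd(6,n-1)=L\), because \(n-1\) is coprime to \(6\) when \(6\mid n\).

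The unique subgroup of \(\Z_M\) of order \(L\) is \(c\Z_M\cong\Z_L\). Hence \(S\subseteq s_0+c\Z_M\) for any fixed \(s_0\in S\).

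Now I normalize. Replacing \(D\) by \(D+s_0\) and every \(s_v\) by \(s_v-s_0\) leaves the supports \(D+s_v\) unchanged, so we may assume \(S\subseteq c\Z_M\). Put \(A=\{a\in\Z_L: ca\in S\}\), so \(|A|=n\). For \(0\le j<c\), define the slices
\[
  D_j=\{x\in\Z_L: j+cx\in D\},
\]
where \(x\mapsto j+cx\) is a well-defined bijection \(\Z_L\to j+c\Z_M\) because \(M=cL\). Fix \(x\in\Z_L\) and write \(s=ca\) with \(a\in A\). Then \(j+cx-s\in D\) exactly when \(x-a\in D_j\), so
\[
  (\one_S*\one_D)(j+cx)=(\one_A*\one_{D_j})(x).
\]
The identity \(\one_S*\one_D=3\one_{\Z_M}\) therefore gives \eqref{eq:slicebalance}. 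Summing over \(x\in\Z_L\) yields \(n|D_j|=3L\), so \(|D_j|=3L/n=d\) in both parity cases.

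Finally, at time \(j+cx\), vertex \(v\) attends if and only if \(j+cx\in D+s_v\). Writing \(s_v=ca\), this says \(x-a\in D_j\), that is, \(a\in x-D_j\). So the attending shifts are exactly \(A\cap(x-D_j)\).

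I expect the only real obstacle to be the gcd computation showing that the torsion subgroup has order exactly \(L\) rather than a multiple of it. That is where the hypothesis \(3\mid n\) enters, through \(c\equiv1\pmod 3\) in the odd case and \(\gcd(n-1,6)=1\) in the even case. The remaining steps are direct bookkeeping.
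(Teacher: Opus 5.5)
Your proof is correct and follows essentially the same route as the paper. You compute \(\gcd(n(n-2),M)=L\) using \(3\mid n\) (through \(\gcd(3,(n-1)/2)=1\) or \(\gcd(6,n-1)=1\)), normalize so that \(S\subseteq c\Z_M\cong\Z_L\), and restrict the height-\(3\) balance to each coset \(j+c\Z_M\) to obtain the slice identities and \(|D_j|=d\).
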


\begin{proof}
The subgroup annihilated by \(n(n-2)\) in \(\Z_M\) has order
\(L=\gcd(M,n(n-2))\). If \(n\) is odd and divisible by \(3\),
write \(L_0=n(n-2)/3\). Then \(M=L_0(n-1)/2\) and
\(n(n-2)=3L_0\), while \(\gcd(3,(n-1)/2)=1\). Thus \(L=L_0\).
If \(n\) is even and divisible by \(3\), write
\(L_0=n(n-2)/6\). Now \(M=L_0(n-1)\),
\(n(n-2)=6L_0\), and \(\gcd(6,n-1)=1\), again giving \(L=L_0\).
This proves the formulas for \(L,c\), and \(d=3L/n\).

Choose \(b_0\in S\). Replace \(S\) by \(S-b_0\) and \(D\) by
\(D+b_0\), which leaves every vertex support unchanged.
By Equation~\eqref{eq:tripletorsion}, the new \(S\) lies in the subgroup
\(c\Z_M\). The map \(a\mapsto ca\) identifies \(\Z_L\) with
this subgroup, so write \(S=cA\), where \(|A|=n\).

Every time has a unique expression \(j+cx\), with
\(0\leq j<c\) and \(x\in\Z_L\). Define
\[
  D_j=\{x\in\Z_L:j+cx\in D\}.
\]
Vertex \(a\in A\) attends at time \(j+cx\) precisely when
\(x-a\in D_j\). Thus the original height-\(3\) balance gives
Equation~\eqref{eq:slicebalance} at each \(x\). Summing this identity over
\(\Z_L\) yields \(n|D_j|=3L\), and hence \(|D_j|=d\).
\end{proof}

The identification of vertices with \(A\) is common to all slices.
Each original meeting occurs on one of these slices. In particular,
every triple of vertices must still occur somewhere among them.

\section{Dilation and capacity}

We use nonnegative integer functions to retain multiplicities.
If \(F:G\to\Z_{\geq0}\) and \(u\) is a positive integer, define
the pushforward under multiplication by \(u\) by
\begin{equation}\label{eq:pushforward}
  F^{[u]}(x)=\sum_{\substack{y\in G\\uy=x}}F(y).
\end{equation}
Multiplication by \(u\) need not be injective: coincident images
are counted with their full multiplicities.

\subsection{Frobenius dilation}

Dilation arguments for translational and level tilings appear
in several forms: the onefold setting of Coven and Meyerowitz
\cite[Lemma~3.1]{CM}, the level tilings of Greenfeld and Tao
\cite[Lemma~3.1]{GT}, and the measurable abelian actions of
Greb{\'i}k et al. \cite[Theorem~1.2(i) and Remark~2.2]{GGRT}.
We use the following finite-group constant-height version,
retaining multiplicities under collisions.

\begin{lemma}[Frobenius dilation]\label{lem:dilation}
Let \(F,H:G\to\Z_{\geq0}\) satisfy
\[
  F*H=\lambda\one_G,\qquad
  \sum_{x\in G}F(x)=m,
\]
where \(\lambda\) is a positive integer. If \(p\) is prime,
\(p>\lambda\), and \(p\nmid m\), then
\[
  F^{[p]}*H=\lambda\one_G.
\]
Consequently \(F^{[Q]}*H=\lambda\one_G\) whenever every prime
factor of \(Q\) is greater than \(\lambda\) and does not divide \(m\).
\end{lemma}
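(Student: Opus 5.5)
The plan is to work in the integral group ring \(\Z[G]\) and exploit the Frobenius congruence modulo \(p\). Identify \(F\) with \(\underline F=\sum_{x}F(x)[x]\), and similarly \(H\). Convolution becomes multiplication, the constant function \(\one_G\) becomes \(\underline G=\sum_{x\in G}[x]\), and the pushforward \(F^{[p]}\) becomes \(\sum_x F(x)[px]\). Since \(\Z[G]\) is commutative, the freshman's-dream identity modulo \(p\) together with Fermat's little theorem \(F(x)^p\equiv F(x)\pmod p\) gives
\[
  \underline F^{\,p}\equiv\sum_x F(x)^p[px]\equiv \underline{F^{[p]}}\pmod{p\Z[G]} .
\]

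Next compute \(\underline F^{\,p}\underline H\) exactly. We have \(\underline F^{\,p}\underline H=\underline F^{\,p-1}\cdot\lambda\underline G\). Since \(\underline F\,\underline G=m\underline G\), this equals \(\lambda m^{p-1}\underline G\). Because \(p\nmid m\), Fermat gives \(\lambda m^{p-1}\equiv\lambda\pmod p\). Combining with the congruence above, every value of \(F^{[p]}*H\) is an integer congruent to \(\lambda\) modulo \(p\). These values are also nonnegative, as \(F^{[p]}\) and \(H\) are nonnegative. Since \(0<\lambda<p\), each value has the form \(\lambda+pe(x)\) with \(e(x)\geq0\): a negative \(e(x)\) would make the value at most \(\lambda-p<0\).

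To finish, compare total masses. The pushforward preserves mass, so \(\sum_x F^{[p]}(x)=m\). Hence the total of \(F^{[p]}*H\) is \(m\sum H\), and by the hypothesis this equals \(\lambda|G|\). Therefore \(\sum_x pe(x)=0\) with all \(e(x)\geq0\), which forces \(e\equiv0\) and proves \(F^{[p]}*H=\lambda\one_G\).

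For the consequence, factor \(Q\) into primes and iterate, using \((F^{[u]})^{[v]}=F^{[uv]}\). At each step the new function is still nonnegative integer-valued, has the same mass \(m\), and satisfies the same balance identity. So the hypotheses are restored for the next prime factor.

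The only delicate point is the step from congruence to equality. It is exactly where \(p>\lambda\) and nonnegativity enter, and the mass comparison closes it. I do not expect a serious obstacle beyond checking that collisions under \(x\mapsto px\) are handled correctly, and they are, because multiplicities are summed in \(F^{[p]}\).
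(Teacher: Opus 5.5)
Your proposal is correct and follows essentially the same argument as the paper. Both use the Frobenius congruence \(F^{[p]}\equiv F^{*p}\pmod p\) in the group ring, the exact computation \(F^{*p}*H=\lambda m^{p-1}\one_G\equiv\lambda\one_G\), the lower bound \(\geq\lambda\) from nonnegativity and \(p>\lambda\), the mass comparison forcing equality, and iteration over the prime factors of \(Q\); your write-up \(\lambda+pe(x)\) with \(e(x)\geq0\) just spells out a step the paper states in one line.
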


\begin{proof}
In the group ring over \(\mathbb F_p\), the Frobenius identity gives
\[
  F^{[p]}\equiv F^{*p}\pmod p.
\]
Indeed, raising a sum to the \(p\)-th power multiplies its group
indices by \(p\), while the integer coefficients satisfy
\(F(y)^p\equiv F(y)\pmod p\). Hence
\begin{align*}
  F^{[p]}*H
  &\equiv F^{*(p-1)}*(F*H)\\
  &=\lambda m^{p-1}\one_G
   \equiv\lambda\one_G\pmod p.
\end{align*}
Every coefficient on the left is a nonnegative integer congruent
to \(\lambda\) modulo \(p\). Since \(0<\lambda<p\), it is at least
\(\lambda\). The pushforward preserves total mass, so
\[
  \sum_{x\in G}(F^{[p]}*H)(x)
  =m\sum_{x\in G}H(x)=\lambda|G|.
\]
All coefficients must therefore equal \(\lambda\).
Iterating this argument proves the assertion for \(Q\), since
the mass of the first factor remains \(m\) and successive
pushforwards compose.
\end{proof}

\subsection{Prime-power capacity}

A nonconstant Fourier level of a function on \(\Z_{p^a}\) is
\emph{occupied} if the Fourier transform is nonzero at some
character of exact order \(p^b\), for the corresponding
\(b\in\{1,\ldots,a\}\).

\begin{lemma}[Nonnegative Fourier levels]\label{lem:levels}
Let \(p\) be prime, \(a\geq1\), and
\(w:\Z_{p^a}\to\mathbb R_{\geq0}\) have mean \(\mu\).
If at most \(s\) nonconstant Fourier levels are occupied, then
\[
  \max_x w(x)\leq p^s\mu.
\]
\end{lemma}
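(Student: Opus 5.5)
The plan is to use the standard decomposition of the Fourier transform on \(\Z_{p^a}\) by character order, and to bound the pointwise contribution of each level. For \(b\in\{0,1,\ldots,a\}\), let \(\Gamma_b\) be the set of characters of exact order \(p^b\). Fourier inversion gives
\[
  w(x)=\frac1{p^a}\sum_{b=0}^{a}\sum_{\chi\in\Gamma_b}\widehat w(\chi)\chi(x),
\]
and the \(b=0\) term is \(\mu\). For \(b\geq1\), write \(P_b\) for the projection of \(w\) onto the span of \(\Gamma_b\). The characters in \(\Gamma_{\leq b}=\Gamma_0\cup\cdots\cup\Gamma_b\) are exactly those trivial on the subgroup \(K_b=p^{b}\Z_{p^a}\) of order \(p^{a-b}\). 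Hence projection onto \(\Gamma_{\leq b}\) is averaging over cosets of \(K_b\), and we may write
\[
  P_b w=\mathrm{Av}_{K_b}w-\mathrm{Av}_{K_{b-1}}w,
\]
where \(\mathrm{Av}_K w(x)=|K|^{-1}\sum_{k\in K}w(x+k)\).

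The key step is a pointwise estimate on each occupied level. Since \(w\geq0\), we have \(P_b w(x)\leq \mathrm{Av}_{K_b}w(x)\). I would rather obtain a bound multiplicative in \(p\), so I will argue inductively along the chain \(K_a=\{0\}\subseteq K_{a-1}\subseteq\cdots\subseteq K_0=\Z_{p^a}\). Put \(g_b=\mathrm{Av}_{K_b}w\), so that \(g_0=\mu\) and \(g_a=w\). If level \(b\) is unoccupied, then \(g_b=g_{b-1}\). If it is occupied, nonnegativity gives
\[
  g_b(x)\leq \sum_{y\in x+K_{b-1}}\frac{|K_b|}{|K_{b-1}|}\cdot\frac{1}{|K_b|}\sum_{k\in K_b} w(y+k),
\]
because each \(K_b\)-coset inside \(x+K_{b-1}\) carries nonnegative mass. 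Since \([K_{b-1}:K_b]=p\), this yields
\[
  g_b(x)\leq p\,g_{b-1}(x).
\]
More cleanly, \(g_b(x)\leq p\max g_{b-1}\). This multiplies by \(p\) only at occupied levels and by \(1\) at unoccupied ones, so induction gives \(\max g_a\leq p^s\mu\).

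The points to verify carefully are the identification of \(\Gamma_{\leq b}\) with the annihilator of \(K_b\), and the equality \(g_b=g_{b-1}\) exactly when \(\widehat w\) vanishes on \(\Gamma_b\). The second follows by expanding \(g_b-g_{b-1}=P_bw\) in characters. I expect the only real obstacle to be bookkeeping with the indices. The essential idea is that nonnegativity converts a finer average into at most \([K_{b-1}:K_b]=p\) times a coarser one, and unoccupied levels contribute no refinement.
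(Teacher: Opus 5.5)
Your argument is correct and is essentially the paper's proof: your $g_b=\mathrm{Av}_{K_b}w$ are exactly the paper's fiber averages $w_b$, unoccupied levels give $g_b=g_{b-1}$, and nonnegativity gives at most a factor $p$ at each occupied level.

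There is one bookkeeping slip. In your displayed inequality the prefactor should be $1/|K_b|$, not $|K_b|/|K_{b-1}|$. As written, the right side equals $|K_b|\,g_{b-1}(x)$, which fails in general when $b=a$. The conclusion $g_b(x)\le p\,g_{b-1}(x)$ that you draw is still right, since $\sum_{x+K_b}w\le\sum_{x+K_{b-1}}w$ and $[K_{b-1}:K_b]=p$.
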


\begin{proof}
For \(0\leq b\leq a\), define a function on \(\Z_{p^b}\) by
\[
  w_b(x)=\frac{1}{p^{a-b}}
         \sum_{\substack{y\in\Z_{p^a}\\y\equiv x\pmod{p^b}}}w(y).
\]
Regard it also as a function on \(\Z_{p^a}\) by reduction
modulo \(p^b\). Then \(w_0=\mu\) and \(w_a=w\).
The averaging defining \(w_b\) retains exactly the Fourier
characters whose orders divide \(p^b\). Consequently
\(w_b-w_{b-1}\) consists of the exact-order \(p^b\) level.
If that level is unoccupied, \(w_b=w_{b-1}\).

In every case, a value of \(w_{b-1}\) is the mean of the
\(p\) nonnegative values of \(w_b\) above it:
\[
  w_{b-1}(x)=\frac1p\sum_{j=0}^{p-1}w_b(x+jp^{b-1}).
\]
Thus each child value is at most \(p\) times its parent value.
Starting with \(w_0=\mu\), at most \(s\) levels can incur this
factor \(p\), giving the bound.
\end{proof}

\begin{proposition}[Prime-power capacity]\label{prop:primecapacity}
Let \(F,H:\Z_L\to\Z_{\geq0}\) satisfy
\[
  F*H=\lambda\one_{\Z_L},\qquad
  \sum F=m,\qquad \sum H=d,
\]
where \(\lambda>0\). If \(p^a\mid L\), with \(p\) prime and
\(a\geq1\), then for every \(x\in\Z_{p^a}\),
\begin{equation}\label{eq:primecapacity}
  \sum_{\substack{y\in\Z_L\\y\equiv x\pmod{p^a}}}H(y)
  \leq
  \left\lfloor\frac{p^{\min(a,v_p(m))}d}{p^a}\right\rfloor .
\end{equation}
Here \(v_p(m)\) is the exponent of \(p\) in \(m\).
\end{proposition}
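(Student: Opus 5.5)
The plan is to project everything to the cyclic group \(\Z_{p^a}\), bound the number of occupied Fourier levels of the projected \(H\) by \(\min(a,v_p(m))\), and then apply Lemma~\ref{lem:levels}. Let \(\pi:\Z_L\to\Z_{p^a}\) be reduction modulo \(p^a\); it is a surjective homomorphism because \(p^a\mid L\). Let \(F',H':\Z_{p^a}\to\Z_{\geq0}\) be the pushforwards
\[
  F'(x)=\sum_{\pi(y)=x}F(y),\qquad H'(x)=\sum_{\pi(y)=x}H(y).
\]
The left side of \eqref{eq:primecapacity} is exactly \(H'(x)\). Pushforward under a homomorphism commutes with convolution, so
\[
  F'*H'=\lambda\frac{L}{p^a}\one_{\Z_{p^a}}.
\]
Also \(\sum F'=m\) and \(\sum H'=d\), so \(H'\) has mean \(\mu=d/p^a\).

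On the Fourier side, the convolution identity says that \(\widehat{F'}(\chi)\widehat{H'}(\chi)=0\) for every nontrivial character \(\chi\) of \(\Z_{p^a}\). The key step is a Galois-conjugacy observation. Write \(P_F(X)=\sum_x F'(x)X^x\), an integer polynomial modulo \(X^{p^a}-1\). For a character of exact order \(p^b\), \(\widehat{F'}(\chi)\) is \(P_F\) evaluated at a primitive \(p^b\)-th root of unity. Since \(\Phi_{p^b}\) is irreducible over \(\mathbb Q\), \(P_F\) vanishes at one primitive \(p^b\)-th root of unity if and only if it vanishes at all of them, that is, iff \(\Phi_{p^b}\mid P_F\). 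Thus each Fourier level of \(F'\) is either entirely zero or entirely nonzero. The same dichotomy holds for \(H'\).

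This gives the comparison between the two functions. If level \(b\) of \(F'\) is entirely nonzero, then level \(b\) of \(H'\) must be entirely zero. So the occupied levels of \(H'\) lie among the levels at which \(F'\) vanishes identically.

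It remains to count the vanishing levels of \(F'\). Suppose \(F'\) vanishes on levels \(b_1,\ldots,b_s\). The cyclotomic polynomials \(\Phi_{p^{b_i}}\) are distinct and irreducible, and \(X^{p^a}-1=\prod_{b\le a}\Phi_{p^b}\). Lifting \(P_F\) to a genuine integer polynomial of degree less than \(p^a\), the product \(\prod_i\Phi_{p^{b_i}}\) divides it in \(\mathbb Q[X]\). Because these factors are monic, Gauss's lemma gives divisibility in \(\Z[X]\). Evaluating at \(X=1\), where \(\Phi_{p^b}(1)=p\), yields \(p^s\mid P_F(1)=m\). Hence \(s\le v_p(m)\), and trivially \(s\le a\).

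Therefore at most \(\min(a,v_p(m))\) nonconstant levels of \(H'\) are occupied. Lemma~\ref{lem:levels}, applied to the nonnegative function \(H'\), then gives
\[
  H'(x)\le p^{\min(a,v_p(m))}\frac{d}{p^a}.
\]
Since \(H'(x)\) is an integer, we may take the floor, which is \eqref{eq:primecapacity}.

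The main obstacle is the all-or-nothing behaviour of each Fourier level; the plan handles it through the cyclotomic divisibility argument above. One point needs care: the divisibility must hold for an integer lift of \(P_F\), not merely modulo \(X^{p^a}-1\). This is fine because every \(\Phi_{p^b}\) with \(b\le a\) divides \(X^{p^a}-1\).
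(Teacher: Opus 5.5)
Your proposal is correct and follows essentially the same route as the paper. You project modulo \(p^a\), use the convolution identity to force \(\Phi_{p^b}\) to divide the projected generating polynomial of \(F\) at every occupied level of the projected \(H\), evaluate at \(1\) to get \(p^s\mid m\), and finish with Lemma~\ref{lem:levels} and integrality. Your explicit Galois-conjugacy dichotomy only repackages the paper's step from one vanishing primitive root to divisibility by \(\Phi_{p^b}\).
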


\begin{proof}
Put \(q=p^a\), and let \(\alpha,w\) be the pushforwards of \(F,H\)
under reduction modulo \(q\). Projection commutes with convolution.
Each residue has \(L/q\) preimages, so
\begin{equation}\label{eq:projectedbalance}
  \alpha*w=\lambda(L/q)\one_{\Z_q},\qquad
  \sum\alpha=m,\qquad \sum w=d.
\end{equation}
At a nontrivial character \(\chi\), nonzero \(\widehat w(\chi)\)
therefore forces \(\widehat\alpha(\chi)=0\).

Consider the integer polynomial
\[
  P(X)=\sum_{x=0}^{q-1}\alpha(x)X^x.
\]
If the exact-order \(p^b\) level of \(w\) is occupied, then \(P\)
vanishes at a primitive \(p^b\)-th root of unity. Hence the
cyclotomic polynomial \(\Phi_{p^b}\) divides \(P\).
For \(s\) distinct occupied levels, the corresponding monic
irreducible cyclotomic polynomials are pairwise coprime in
\(\mathbb Q[X]\). Their product therefore divides \(P\) in
\(\Z[X]\) by Gauss's lemma. Evaluation at \(1\), using
\(\Phi_{p^b}(1)=p\), shows that
\[
  p^s\mid P(1)=m.
\]
Thus \(s\leq\min(a,v_p(m))\).

The function \(w\) is nonnegative and has mean \(d/q\).
Lemma~\ref{lem:levels} gives
\(\max w\leq p^{\min(a,v_p(m))}d/q\).
Since \(w\) is integer-valued, this proves
Equation~\eqref{eq:primecapacity}.
\end{proof}

\subsection{Dilation followed by conditioning}

\begin{proposition}[Conditional capacity]\label{prop:mixedcapacity}
Let \(F,H:\Z_L\to\Z_{\geq0}\) satisfy
\[
 F*H=\lambda\one_{\Z_L},\qquad
 \sum F=m,\qquad \sum H=d,
\]
with positive integer \(\lambda\). Suppose \(q=Qp^a\mid L\),
where \(p\) is prime, \(a\geq0\), \(\gcd(Q,p)=1\), and every
prime divisor of \(Q\) is greater than \(\lambda\) and coprime
to \(m\). Then for every \(x\in\Z_q\),
\begin{equation}\label{eq:mixedcapacity}
  \sum_{\substack{y\in\Z_L\\y\equiv x\pmod q}}H(y)
  \leq
  \left\lfloor\frac{\gcd(q,m)d}{q}\right\rfloor.
\end{equation}
For \(a=0\), the conclusion is the exact equality \(d/Q\)
for each residue modulo \(Q\).
\end{proposition}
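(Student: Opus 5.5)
Since \(\gcd(Q,p)=1\), the Chinese remainder theorem identifies \(\Z_q\) with \(\Z_Q\times\Z_{p^a}\). The plan is to handle the \(Q\)-component by dilation, which makes it rigid, and then to apply Proposition~\ref{prop:primecapacity} on the \(p\)-part after conditioning on the \(Q\)-residue. Because every prime factor of \(Q\) is coprime to \(m\), we have \(\gcd(q,m)=\gcd(p^a,m)=p^{\min(a,v_p(m))}\). So the target bound is \(\lfloor p^{\min(a,v_p(m))}d/q\rfloor\), the bound of Proposition~\ref{prop:primecapacity} with \(d\) replaced by \(d/Q\).

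\emph{Step 1 (the case \(a=0\)).} By Lemma~\ref{lem:dilation}, \(F^{[Q]}*H=\lambda\one_{\Z_L}\). Multiplication by \(Q\) maps \(\Z_L\) into the subgroup \(Q\Z_L\). Hence \(F^{[Q]}\) is supported on \(Q\Z_L\), with total mass \(m\). For \(x\in\Z_L\), the convolution \((F^{[Q]}*H)(x)\) involves only values \(H(y)\) with \(y\equiv x\pmod Q\). Summing the identity over the \(L/Q\) elements \(x\) of a fixed residue class \(r\) modulo \(Q\) gives
\[
 m\sum_{y\equiv r\ (Q)}H(y)=\lambda L/Q .
\]
Summing the original balance over all of \(\Z_L\) gives \(md=\lambda L\). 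Together these give exactly \(d/Q\) for every residue class modulo \(Q\), which is the \(a=0\) claim.

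\emph{Step 2 (conditioning).} Fix a residue \(r\) modulo \(Q\). Let \(H_r\) be the restriction of \(H\) to the class \(r+Q\Z_L\), translated by \(-r\) so that it lives on \(Q\Z_L\cong\Z_{L/Q}\). From Step 1, \(F^{[Q]}*H=\lambda\one\) restricted to the class \(r\) reads \(F'*H_r=\lambda\one_{\Z_{L/Q}}\). Here \(F'\) is \(F^{[Q]}\) viewed on \(Q\Z_L\cong\Z_{L/Q}\), via \(Qz\mapsto z\); it is nonnegative and integer-valued with mass \(m\). Step 1 gives \(\sum H_r=d/Q\).

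\emph{Step 3 (prime-power capacity on the slice).} Since \(p^a\mid L/Q\), apply Proposition~\ref{prop:primecapacity} to \(F',H_r\) on \(\Z_{L/Q}\). This bounds the mass of \(H_r\) in any residue class modulo \(p^a\) by \(\lfloor p^{\min(a,v_p(m))}(d/Q)/p^a\rfloor\), which equals the right side of Equation~\eqref{eq:mixedcapacity}. It remains to check the correspondence of residues. A class \(x\in\Z_q\) corresponds to a pair \((r,x')\), and \(y=r+Qz\) with \(y\equiv x\pmod q\) means \(z\) lies in a fixed class modulo \(p^a\), because \(Q\) is invertible modulo \(p^a\).

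The main obstacle is the bookkeeping in Step 2. One must check that the identification \(Q\Z_L\cong\Z_{L/Q}\) turns the restricted convolution into an honest convolution on \(\Z_{L/Q}\) with constant height \(\lambda\). One must also check that the residue classes modulo \(q\) correspond correctly to classes modulo \(p^a\) in the quotient coordinate \(z\). Both should follow from CRT and the invertibility of \(Q\) modulo \(p^a\).
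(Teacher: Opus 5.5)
Your proposal is correct and follows the paper's proof essentially step for step: you dilate by \(Q\) using Lemma~\ref{lem:dilation}, condition on the cosets \(r+Q\Z_L\) to obtain height-\(\lambda\) convolutions on \(\Z_{L/Q}\) with slice mass \(d/Q\), and apply Proposition~\ref{prop:primecapacity} on each slice, using \(\gcd(Q,m)=1\) to identify the bound. The differences are cosmetic: you get the slice masses by summing the dilated identity over a residue class rather than by summing the conditional balance, and you spell out the residue correspondence via the Chinese remainder theorem.
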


\begin{proof}
First apply Lemma~\ref{lem:dilation} to obtain
\[
  F'=F^{[Q]},\qquad F'*H=\lambda\one_{\Z_L}.
\]
Since \(Q\mid L\), \(F'\) is supported on \(Q\Z_L\).
Set \(K=\Z_{L/Q}\), and define
\[
  F_K(y)=F'(Qy),\qquad H_z(y)=H(z+Qy)
  \quad(y\in K,\ 0\leq z<Q).
\]
For \(x\in K\), the support of \(F'\) gives
\begin{equation}\label{eq:conditionalbalance}
  \begin{aligned}
    (F_K*H_z)(x)
    &=\sum_{y\in K}F'(Qy)H\bigl(z+Q(x-y)\bigr)\\
    &=(F'*H)(z+Qx)=\lambda.
  \end{aligned}
\end{equation}
In particular, each conditional convolution has height
\(\lambda\). Moreover, \(\sum_K F_K=m\). Since \(md=\lambda L\),
summing Equation~\eqref{eq:conditionalbalance} yields
\[
  \sum_{y\in K}H_z(y)=\frac{\lambda(L/Q)}m=\frac dQ.
\]
This also proves \(Q\mid d\).

If \(a=0\), these slice masses are the asserted equality.
If \(a\geq1\), then \(p^a\mid L/Q\), and
Proposition~\ref{prop:primecapacity} applies to every pair \(F_K,H_z\).
A residue class modulo \(Qp^a\) in \(\Z_L\) is a residue class
modulo \(p^a\) within one of the slices \(z+QK\). Its mass
is therefore at most
\[
  \left\lfloor
    \frac{p^{\min(a,v_p(m))}(d/Q)}{p^a}
  \right\rfloor.
\]
Finally \(\gcd(Q,m)=1\), so
\(\gcd(q,m)=p^{\min(a,v_p(m))}\), giving
Equation~\eqref{eq:mixedcapacity}.
\end{proof}

The conditional balance \eqref{eq:conditionalbalance} is the
reason prime-power capacity applies here. The argument dilates
first and then conditions on cosets; it does not multiply bounds
obtained from independent projections.

\section{Complete classification for triples}

\begin{proof}[Proof of Theorem~\ref{thm:triple}]
The case \(n=3\) is the one-meeting cycle. If \(3\nmid n\),
Proposition~\ref{prop:construction} supplies the required ordering.

Suppose, for a contradiction, that \(3\mid n\), \(n>3\), and
\(\E(n,3)\) holds. Use \(A,D_j,L,d\) from
Proposition~\ref{prop:reduction}. For \(n=6\) that proposition would give
\(|A|=6\) in \(\Z_4\), already impossible. We may therefore
assume \(n\geq9\). Since \(L=(n/3)d\), we have \(d\mid L\).
We apply Proposition~\ref{prop:mixedcapacity} with \(q=d\),
\(F=\one_A\), \(H=\one_{D_j}\), \(m=n\), and \(\lambda=3\).

First suppose \(n\) is odd. Then \(d=n-2\) is odd,
\(3\nmid d\), and \(\gcd(n,d)=1\). Every prime divisor of \(d\)
is consequently at least \(5\) and does not divide \(n\).
Take \(Q=d\), \(p=2\), and \(a=0\) in
Proposition~\ref{prop:mixedcapacity}. On every slice,
\begin{equation}\label{eq:oddcapacity}
  |D_j\cap(x+d\Z_L)|\leq1\qquad(x\in\Z_L).
\end{equation}
There are \(n=d+2>d\) shifts in \(A\), so two distinct shifts
\(u,v\in A\) are congruent modulo \(d\). If they attended
together at a time with slice coordinate \(x\), then the two
distinct points \(x-u,x-v\) would lie in \(D_j\) and be congruent
modulo \(d\), contrary to Equation~\eqref{eq:oddcapacity}.
Thus these two vertices can never meet, contradicting the
occurrence of every triple containing them.

Now suppose \(n\) is even. Here \(n=2d+2\),
\(3\nmid d\), and \(\gcd(n,d)=\gcd(2,d)\leq2\).
Write \(d=2^aQ\), with \(Q\) odd.
Every prime factor of \(Q\) is at least \(5\) and coprime to \(n\).
Taking \(p=2\) in Proposition~\ref{prop:mixedcapacity} gives
\begin{equation}\label{eq:evencapacity}
  |D_j\cap(x+d\Z_L)|\leq\gcd(n,d)\leq2
  \qquad(x\in\Z_L)
\end{equation}
on every slice. In particular, if \(a\geq1\), then
\(n=2(d+1)\) with \(d+1\) odd, so \(v_2(n)=1\), as required
by the prime-power bound.

Since \(|A|=2d+2>2d\), some residue modulo \(d\) contains
three distinct shifts \(u,v,w\in A\). A meeting of these
three vertices would require three distinct points
\(x-u,x-v,x-w\) in one fiber of \(D_j\), contradicting
Equation~\eqref{eq:evencapacity}. The same three vertices are used on
every slice, so their required triple is absent from the
entire ordering. This completes the contradiction.
\end{proof}

\section{Concluding remarks}

The graph and triple cases exhibit different rigidity mechanisms.
For \(k=2\), reflected balance and pair-intersection counts force
the vertex shifts into a torsion coset that is too small in the
excluded cases. For \(k=3\), this torsion reduction must be combined
with multiplicity-preserving dilation and conditional capacity
bounds.

Proposition~\ref{prop:construction} recalls the known sufficiency of coprimality
for every \(2\leq k<n\), while Theorems~\ref{thm:graph} and~\ref{thm:triple} establish
its necessity for the first two nontrivial uniformities. Whether
the weak condition of independently shifted incidence rows forces
coprimality in higher uniformities remains open. The mechanism
that could impose this arithmetic rigidity beyond the first two
cases is not yet understood.

These results motivate the following conjecture. The restriction
\(k<n\) excludes the one-meeting case \(k=n\), in which a cyclic
incidence ordering always exists.

\begin{conjecture}\label{conj:main}
For integers \(2\leq k<n\),
\[
  \E(n,k)\quad\Longleftrightarrow\quad\gcd(n,k)=1.
\]
\end{conjecture}

\section*{Use of AI tools}

The author used OpenAI language-model tools, including Codex,
extensively in the mathematical development and preparation of
this paper. These tools contributed to proposing and refining
proof arguments, checking intermediate claims and complete proofs,
locating and comparing related literature, and drafting and
revising the exposition and \LaTeX{} source. The author takes
responsibility for the mathematical claims, proofs, references,
and final text.

\begingroup
\small
\urlstyle{same}
\bibliographystyle{plain}
\bibliography{refs}
\endgroup
\end{document}